\documentclass[11pt]{amsart}
\usepackage[centertags]{amsmath}
\usepackage{amsfonts,amssymb}
\usepackage{graphicx}
\usepackage{enumerate}
 \usepackage{url}
\usepackage{caption}
  \newtheorem{theorem}{Theorem}
  \newtheorem{corollary}{Corollary}

  \newtheorem{lemma}{Lemma}

\begin{document}

\title{Explicit bounds for Dickman's function}

\author{Andreas Weingartner} 
%\contrib[...]{...} 
\address{Department of Mathematics, Southern Utah University, 351 West University Boulevard, Cedar City, Utah 84720, USA} 
\email{weingartner@suu.edu} 
%\urladdr{...} 
%\dedicatory{...} 
%\date{\today} 
%\thanks{...}
%\translator{...} 
%\keywords{...}
%\subjclass{11N25} 

\begin{abstract}
Dickman's function $\rho(u)$ denotes the natural density of integers $n$ whose largest prime factor does not exceed $n^{1/u}$. 
We establish numerically explicit upper and lower bounds for $\rho(u)$, 
resulting in estimates with a relative error of less than $0.005/u^2$ for all $u\ge 5$.
This allows for an approximate evaluation of $\rho(u)$ without the need to solve the delay differential equation numerically.
\end{abstract}

\maketitle

\section{Introduction}
Dickman's function $\rho(u)$ is defined as the continuous solution to the delay differential equation
$u \rho'(u)+\rho(u-1)=0$ with initial condition $\rho(u)=1$ for $0\le u\le 1$. 
It represents the natural density of integers $n$ whose largest prime factor does not exceed $n^{1/u}$ \cite{Dick}.

For $u>1$, let $\xi=\xi(u)$ be the unique positive solution of $e^\xi=1+u\xi$ and put $\xi(1)=0$. 
Define 
$$
I(s):= \int_0^s \frac{e^z-1}{z}dz.
$$

De Bruijn \cite{BruRho} showed that $\rho(u)\sim \tilde{\rho}(u)$ as $u\to \infty$, where
$$
\tilde{\rho}(u) := \frac{\exp\{\gamma - u \xi + I(\xi)\}}{\sqrt{2 \pi I''(\xi)}}
$$
and $\gamma$ is Euler's constant.
Alladi \cite{All} sharpened this to $\rho(u)= \tilde{\rho}(u)(1+O(1/u))$.
Our goal is to make the estimate entirely explicit,
providing a convenient way to obtain approximate values of $\rho(u)$
without the need to solve the delay differential equation numerically.

\begin{theorem}\label{thm1}
For $u>1$ we have
$$
\tilde{\rho}(u) \left(1-\frac{1}{12u}\right)<\rho(u)< \tilde{\rho}(u) \left(1-\frac{1}{12u(1+\frac{1}{\log u})}\right) .
$$
\end{theorem}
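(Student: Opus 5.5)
The plan is to work with the Laplace-transform representation of $\rho$ and a saddle-point analysis made fully explicit. Recall that $\hat\rho(s)=\int_0^\infty e^{-su}\rho(u)\,du = \exp\{\gamma+I(-s)\}$. Fourier inversion along the vertical line through the saddle point $-\xi$ gives
$$
\rho(u)=\frac{1}{2\pi i}\int_{-\xi-i\infty}^{-\xi+i\infty} \exp\{\gamma+su+I(-s)\}\,ds
=\frac{e^{\gamma-u\xi+I(\xi)}}{2\pi}\int_{-\infty}^{\infty}\exp\{I(\xi+it)-I(\xi)-iut\}\,dt .
$$
Since $I'(\xi)=(e^\xi-1)/\xi=u$, the linear term cancels. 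Writing $f(t)=I(\xi+it)-I(\xi)-iut$, we have $f(t)=-\tfrac12 I''(\xi)t^2+\dots$, and the main term is exactly $\tilde\rho(u)$.

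To get the explicit $1/(12u)$ correction, I will expand $f$ to fourth order. With $\sigma^2=I''(\xi)$ and $I^{(k)}(\xi)$ the higher derivatives, the standard Edgeworth-type expansion gives
$$
\rho(u)=\tilde\rho(u)\Bigl(1+\frac{I^{(4)}(\xi)}{8\sigma^4}-\frac{5\,I'''(\xi)^2}{24\,\sigma^6}+R\Bigr).
$$
The derivatives are explicit in $\xi$ and $u$: $I''(\xi)=(e^\xi(\xi-1)+1)/\xi^2 = u(1-1/\xi)+1/\xi$, and similar closed forms hold for $I'''$ and $I^{(4)}$. Because $e^\xi\approx u\xi$, each $I^{(k)}(\xi)\approx u\xi^{k-2}$ to leading order. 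The bracketed correction is then $\frac{1}{8u}-\frac{5}{24u}=-\frac{1}{12u}$, up to terms of relative size $1/(u\xi)$. That is the source of both the constant $1/12$ and the $\log u$ in the upper bound, since $\xi\approx\log u$. So the key steps, in order, are:

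\begin{enumerate}[(i)]
\item Derive exact formulas for $I^{(k)}(\xi)$, $k\le 6$, in terms of $u$ and $\xi$.
\item Prove explicit inequalities for $\xi$, such as $\log(u\log u)\le\xi\le\log(u\log u)+\text{explicit}$, or simply $\xi<\log u+\log\log u+1$.
\item Bound the central range $|t|\le T$ with $T\approx \pi$, or $T=\sigma^{-2/3}\cdot$const, using a Taylor remainder with an explicit bound on $|f^{(6)}|$ or $|f^{(5)}|$.
\item Bound the tails $|t|>T$.
\end{enumerate}

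For the tail in (iv) I use $\operatorname{Re}\,(I(\xi+it)-I(\xi))=\int_0^1 e^{\xi v}(\cos(tv)-1)/v\,dv\le -c\,u\,\min(t^2,1)$. This makes the tails exponentially small in $u$, far below $1/u^2$ relative to $\tilde\rho$. Note that $\tilde\rho$ carries only the factor $\sigma^{-1}\sim (u\xi)^{-1/2}$, so an $e^{-cu}$ tail is negligible once $u$ is moderately large.

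The main obstacle is step (iii) combined with the final comparison. The two sides of the theorem differ only by $O(1/(u\log u))$ in relative terms. I must therefore show that the true correction $C(u)=\frac{I^{(4)}}{8\sigma^4}-\frac{5I'''^2}{24\sigma^6}+R$ lies strictly between $-\frac{1}{12u}$ and $-\frac{1}{12u(1+1/\log u)}$. To do this I will compute the next-order term in $1/(u\xi)$ exactly from the closed forms, check that its sign is positive (making $C>-1/(12u)$), and check that its size is below $\frac{1}{12u\log u}$ to leading order. I then absorb $R=O(1/u^2)$ explicitly. This works for $u\ge u_0$, with $u_0$ perhaps around $10^3$ to $10^4$.

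For $1<u\le u_0$, I will instead verify the inequality numerically. One option is to compute $\rho$ rigorously via its power-series representation on unit intervals, with interval arithmetic on a fine grid. The other is to use monotonicity of $\rho$ and of $\tilde\rho(u)(1-\cdots)$ between grid points to interpolate. Near $u=1$, note that $\xi\to0$ and the upper factor tends to $1$, so I should check that case separately by a small-$\xi$ expansion of $\tilde\rho$.
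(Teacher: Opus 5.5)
Your overall plan matches the paper's: an explicit saddle-point evaluation giving $\rho=\tilde\rho\,(1+\alpha(u)+R)$ with the same $\alpha(u)$ and $R=O(u^{-2})$, then bracketing $\alpha(u)$, then numerics for $u\le u_0$. The main gap is step (iv), which fails as written.

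On the line $\mathrm{Re}\, s=-\xi$ one has $\widehat{\rho}(s)\sim 1/s$, so $|e^{f(t)}|\asymp e^{-\gamma-I(\xi)}/|t|$ as $|t|\to\infty$. The inversion integral therefore converges only conditionally. A pointwise bound that is constant in $t$, such as $e^{-cu}$ for $|t|\ge 1$, integrates to infinity over $|t|\ge 1$. The far range needs its own argument, as in the paper. For $|t|\ge 1+u\xi$ write $\widehat{\rho}(s)=e^{-J(s)}/s$ with $|J(s)|\le e^{\xi}/|t|\le 1$, so that $\widehat{\rho}(s)=1/s+O(e^{\xi}/|t|^{2})$. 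The error term is absolutely integrable, and the $e^{us}/s$ part is handled by integration by parts.

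Your bound $H(t)\ge cu\min(t^2,1)$, with $H(t)=-\mathrm{Re}(I(\xi+it)-I(\xi))$, is also only valid for $|t|\le\pi$. The weight $e^{h\xi}$ concentrates at $h=1$, where $1-\cos(2\pi h)$ vanishes to second order, so $H(2\pi)\approx 4\pi^2u/\xi^2$. On $\pi\le|t|\le 1+u\xi$ what is available is $H(t)\ge \frac{u\pi^2}{2(\pi^2+\xi^2)}-\frac{2}{\sqrt{\pi^2+\xi^2}}$. That part of the tail is then $e^{-cu/\log^2u}$ times a factor of size about $u^{3/2}\xi$, not $e^{-cu}$, which affects where your explicit range can begin. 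Separately, $T\approx\pi$ is not usable in (iii): the Taylor remainder there is about $I^{(6)}(\xi)\pi^6/6!\approx 1.3u$. Only the small-$T$ option works.

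Your heuristic for the correction is also wrong, and this affects the final comparison. Every $I^{(k)}(\xi)=\int_0^1h^{k-1}e^{h\xi}\,dh$ is $\sim e^{\xi}/\xi\sim u$, not $u\xi^{k-2}$; the latter would give $-\xi^2/(12u)$. In fact $\alpha(u)=-\frac{\xi}{12e^{\xi}}\bigl(1-\frac{1}{\xi}+\frac{\kappa}{\xi^2}\bigr)$ with $\kappa\to\frac12$. Hence $\alpha(u)+\frac{1}{12u}\approx\frac{1}{12u\xi}$, so the next-order term has relative size $1/\xi$, not $1/(u\xi)$.

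The gap between the two bounds of the theorem is exactly $\frac{1}{12u(1+\log u)}$, which agrees with $\frac{1}{12u\xi}$ to leading order. A leading-order check therefore decides nothing. Using $\kappa>0.5$ to absorb $R$ leaves $\alpha(u)+|R|<-\frac{\xi-1}{12(u\xi+1)}$. This lies below the upper bound iff $\xi>1+(1+\frac1u)\log u$, i.e.\ iff the secondary term $\log\log u$ in $\xi$ exceeds roughly $1$.

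You must therefore do the comparison at second order. Use the explicit bound $\xi>\log(u\log u)$ from (ii) together with two-sided bounds on $\kappa$; Lemma~\ref{lemAx} gives $0.5<\kappa<1.75$ for $\xi>1/3$. The lower bound of the theorem then needs $u^2(1-1.75/\xi)$ to exceed $12e^{\xi}$ times your constant in $R$. On the upper side the margin is only about $\frac{\log\log u-1/2}{12u\log^2u}$, so the explicit constant in $R$ directly fixes $u_0$.
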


Table \ref{table1} lists numerical examples based on Theorems \ref{thm1} and \ref{thm2}, the first four of which are 
consistent with the values obtained by van de Lune and Wattel \cite{LunWat}, who iterate the trapezoidal rule to 
approximate $\rho(u)$ to at least five significant digits for many values of $u\le 1000$.
\begin{table}[h] 
  \centering 
  \begin{tabular}{|r|l|l|r|} 
    \hline
    $u$ & $a$: Thm. \ref{thm1} & $a$: Thm. \ref{thm2} & $b$ \\ 
    \hline \hline
    5 & 3.5... & 3.54... & 4 \\ 
    \hline
%10 & 2.7... & 2.770... & 11 \\ 
%   \hline
    20 & 2.46... & 2.461... & 29 \\
    \hline
   100 & 1.000... & 1.000595... & 229 \\
\hline
   1000 & 4.5876... & 4.5876682... & 3464 \\
   \hline
   10000 & 4.8487... & 4.848709620... & 45867 \\
\hline
  \end{tabular}
   \caption{Values of $\rho(u) = a \cdot 10^{-b}$ from Theorems \ref{thm1} and \ref{thm2}.} % Place here for top alignment
   \label{table1} 
\end{table}

We will derive Theorem \ref{thm1} from Theorem \ref{thm2}, which involves the expression
$$
\alpha(u):=\frac{1}{8} \frac{I^{(4)}(\xi)}{(I^{(2)}(\xi))^2} - \frac{5}{24} \frac{(I^{(3)}(\xi))^2}{(I^{(2)}(\xi))^3} \sim -\frac{1}{12u},
$$
where the derivatives $I^{(k)}(s)$ can easily be found from $I'(s)=(e^s -1)/s$.

\begin{theorem}\label{thm2}
We have
$$
\rho(u) =  \tilde{\rho}(u) \left(1+ \alpha(u) + \frac{\theta(u)}{ u^2}\right),
$$
where $|\theta(u)| < 0.005$ for $u\ge 5$, and $0<\theta(u)<0.005$ for $u\ge 9$. 
\end{theorem}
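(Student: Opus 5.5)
The plan is to start from de Bruijn's exact integral representation of $\rho$ via its Laplace transform. Since $\hat\rho(s)=\int_0^\infty e^{-su}\rho(u)\,du=\exp\{\gamma+\int_0^{-s}\frac{e^z-1}{z}dz\}=e^{\gamma+I(-s)}$, Laplace inversion along the vertical line through the saddle point $-\xi$ gives
$$
\rho(u)=\frac{1}{2\pi i}\int_{-\xi-i\infty}^{-\xi+i\infty}e^{\gamma+us+I(-s)}\,ds
=\frac{e^{\gamma-u\xi+I(\xi)}}{2\pi}\int_{-\infty}^{\infty}\exp\{I(\xi+it)-I(\xi)-iut\}\,dt .
$$
Because $I'(\xi)=u$, the phase $f(t):=I(\xi+it)-I(\xi)-iut$ has no linear term. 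Its Taylor expansion is
$$
f(t)=-\tfrac{1}{2}I''t^2-\tfrac{i}{6}I'''t^3+\tfrac{1}{24}I^{(4)}t^4+R_5(t),
$$
with all derivatives taken at $\xi$. The leading Gaussian integral produces exactly $\tilde\rho(u)$.

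The main term is handled in two ranges. I split the integral at $|t|\le T$, where $T=c\,\xi$ or $T=\pi$ is a fixed explicit choice, and $|t|>T$. On $|t|\le T$, I write
$$
e^{f}=e^{-I''t^2/2}\bigl(1+g(t)+\tfrac{1}{2}g(t)^2+E(t)\bigr),\qquad g=-\tfrac{i}{6}I'''t^3+\tfrac{1}{24}I^{(4)}t^4 .
$$
Integrating against the Gaussian, the odd terms vanish. What remains is $\frac{1}{24}I^{(4)}\cdot 3/(I'')^2-\frac{1}{72}(I''')^2\cdot 15/(I'')^3$, which is precisely $\alpha(u)$; this confirms the shape of the statement. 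The error $E$ collects $R_5$, the $t^5,t^6$ and $t^7,t^8$ pieces, and the exponential remainder $e^{f}-\text{(truncation)}$. I bound it explicitly using $|I^{(k)}(\xi+it)|\le I^{(k)}(\xi)$, which holds because all Taylor coefficients of $I$ are positive. I also use the explicit relations $I^{(k)}(\xi)\asymp u\,\xi^{\,1-k}\cdot$(explicit factor) from $I'(s)=(e^s-1)/s$, namely $I''(\xi)=u(1-1/\xi)+1/\xi$ and so on. Each moment $\int|t|^m e^{-I''t^2/2}dt$ is then $O((I'')^{-(m+1)/2})$, and the terms of total order $u^{-2}$ relative to the main term are made explicit.

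For the tail $|t|>T$, I use the real part $\Re f(t)=\int_0^\xi\frac{e^{v}(\cos t-1)}{v}\,dv$ up to adjusting terms; more cleanly, $\Re\bigl(I(\xi+it)-I(\xi)\bigr)=-\int_0^1\frac{e^{\xi v}(1-\cos vt)}{v}dv\le -c\,u\,\min(1,t^2/\xi^2)$. This gives exponential smallness $e^{-cu}$ for the middle range. For very large $|t|$ the integral is not absolutely convergent, so I integrate by parts once, using the $-iut$ oscillation, or equivalently handle it as de Bruijn/Alladi do by subtracting the $1/s$ singular part. That yields a bound of order $e^{-cu}/u$, negligible against $u^{-2}$ once the constants are explicit.

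Finally, I collect all explicit error bounds into $|\theta(u)|<0.005$. For large $u$, say $u\ge u_0$, this comes from the analytic bounds. For $5\le u\le u_0$, I compare $\rho$, computed by rigorous numerical solution of the delay equation (interval-arithmetic power series on unit intervals), with $\tilde\rho(1+\alpha)$. The sign claim $\theta>0$ for $u\ge9$ needs the next asymptotic term: I compute the $u^{-2}$ coefficient from the sixth-order Gaussian moments, show it is positive with limit about $0.0$-something, and show the remaining error is smaller for $u\ge u_0$, again checking the finite range numerically. I expect the main obstacle to be keeping the constants tight enough. The error bound $0.005/u^2$ is close to the true second-order term, so the crude estimate $|I^{(k)}(\xi+it)|\le I^{(k)}(\xi)$ may need refining in the local range, and $u_0$ must be small enough for the numerical verification to be feasible.
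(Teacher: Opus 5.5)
Your overall architecture (inversion on $\mathrm{Re}\,s=-\xi$, Gaussian main term, explicit bounds for large $u$, numerics below some $u_0$) is the paper's. The quantitative core does not work as you set it up, though, and the root cause is a wrong size estimate. In fact $I^{(k)}(\xi)=\int_0^1 h^{k-1}e^{h\xi}\,dh\sim e^{\xi}/\xi\sim u$ for every fixed $k$ (Lemma~\ref{lemIk}), not $u\,\xi^{1-k}$; your own formula $I''(\xi)=u(1-1/\xi)+1/\xi$ already shows this. Consequently the relative correction of order $u^{-2}$ is the full expression
\[
\beta(u)=-\frac{I_6}{48I_2^3}+\frac{7I_3I_5}{48I_2^4}+\frac{35I_4^2}{384I_2^4}-\frac{35I_3^2I_4}{64I_2^5}+\frac{385I_3^4}{1152I_2^6}\qquad (I_k:=I^{(k)}(\xi)).
\]
Asymptotically its five terms are about $0.02$, $0.15$, $0.09$, $0.55$, $0.33$ times $u^{-2}$ in absolute value, but they cancel to $u^2\beta(u)\to 1/288\approx 0.0035$.

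So $|\theta|<0.005$ for large $u$ cannot come from collecting absolute error bounds. You must compute $\beta$ exactly, and this is needed for the bound itself, not only for the sign claim: $1/288<0.005$ is exactly what makes the constant $0.005$ attainable. Everything else must then be bounded by $Cu^{-3}$, so that $\theta=u^2\beta+\lambda/u$. The paper gets $|\lambda|<5.44$, hence $|\theta|<1/288+5.5/u<0.005$ for $u\ge 3600$.

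Your truncation cannot deliver this. With $f$ expanded only to $t^4$ and $R_5$ bounded through $|I^{(5)}(\xi+it)|\le I^{(5)}(\xi)$, that remainder alone gives a relative error of size $\asymp I_5I_2^{-5/2}\asymp u^{-3/2}$, i.e.\ only $|\theta|\lesssim 0.05\sqrt{u}$. You have to:
\begin{enumerate}[(i)]
\item keep the Taylor polynomial through order $7$, so the odd terms vanish exactly by symmetry;
\item apply the crude derivative bound only to the order-$8$ remainder;
\item expand $e^{M}$ through $M^5$ with an $M^6$ remainder.
\end{enumerate}
The crude estimate you worried about is then perfectly adequate, because it only enters at order $u^{-3}$. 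Note also that the $u^{-2}$ coefficient involves Gaussian moments up to $\tau^{12}$, coming from $I_3^4\tau^{12}$, not just sixth-order ones.

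Second, the expansion cannot be carried out on a fixed window $|t|\le\pi$ or $|t|\le c\xi$. There $|g(t)|$ is of size $u$ (about $I_3/6$ already at $t=1$), so neither the Taylor remainder nor the truncated series for $e^{g}$ is under control. The window has to shrink, e.g.\ $\delta=\sqrt{8\log u/u}$. This keeps the non-Gaussian part of the exponent bounded (the paper verifies $\mathrm{Re}\,M\le\log 1.1$ for $u\ge 980$), while the decay at $|\tau|=\delta$ is still $O(u^{-3})$.

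That in turn requires, on $\delta\le|\tau|\le\pi$, a lower bound $H(\tau)\ge c\,I''(\xi)\tau^2$ with an absolute constant $c$. The paper gets $c=0.45$ on $|\tau|\le 1$ from $1-\cos(h\tau)\ge 0.45h^2\tau^2$, giving $e^{-0.45I''(\xi)\delta^2}\approx u^{-3.6}$, and $c=2/\pi^2$ on $1\le|\tau|\le\pi$. Your bound $c\,u\min(1,t^2/\xi^2)$ loses a factor $\xi^2$ and yields only $e^{-8c\log u/\xi^2}$ at $|t|=\delta$, which does not even tend to zero.

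You also need to bound the Gaussian tails beyond $\delta$ of the polynomial main terms (the paper's $E_5$, via Lemma~\ref{lemtail}). Finally, for $|\tau|\ge\pi$ your bound gives decay $e^{-cu/\xi^2}$, not $e^{-cu}$ (indeed $H(2\pi)\asymp u/\xi^2$). This still suffices, but the claim should be corrected.
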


Figure \ref{fig1} shows the graph of $\rho(u)/\tilde{\rho}(u)$, the bounds from Theorem \ref{thm1} and
the estimate $1+\alpha(u)$ from Theorem \ref{thm2}, for $1< u \le 10$.

\begin{figure}[htbp]
    \centering
    \includegraphics[width=0.92\textwidth]{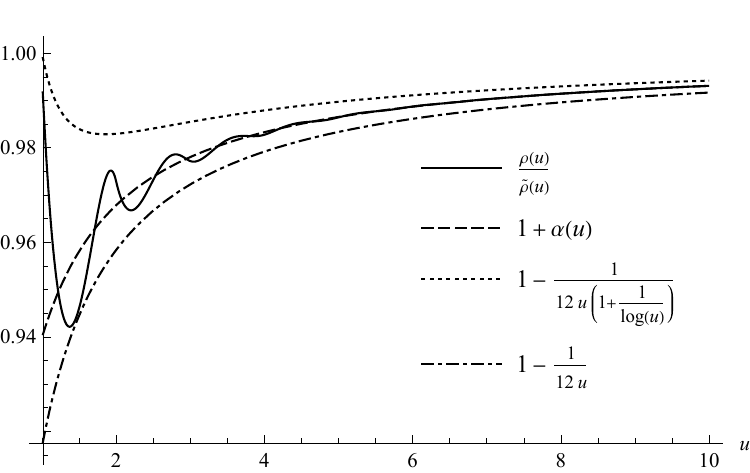}
    \caption{Theorems \ref{thm1} and \ref{thm2} for $1< u \le 10$.}
    \label{fig1}
\end{figure}

Dickman \cite{Dick} showed that  $\rho(u)=\lim_{x\to \infty} \Psi(x,x^{1/u})/x$, where
$\Psi(x,y)$ denotes the number of positive integers up to $x$ whose largest prime factor does not exceed $y$.
Pomerance \cite[(1.25)]{Gran} asked if $\Psi(x,y)\ge x\rho(u)$ holds for $x/2\ge y\ge 1$, where $u=(\log x)/\log y$.
Gorodetsky \cite{Gor} confirmed this for $x$ sufficiently large, assuming the Riemann hypothesis and 
one additional hypothesis about the error term in the prime number theorem \cite[(1.7)]{Gor}.
Together with Theorem \ref{thm1} or Theorem \ref{thm2}, we obtain conditional lower bounds for
$\Psi(x,y)$ that are easy to evaluate numerically.

Hildebrand and Tenenbaum \cite{HT}, Smida \cite{Smida} and Xuan \cite{Xuan} derive asymptotic expansions for $\rho(u)$, but without being numerically explicit.

To compute $\tilde{\rho}(u)$ (or $\log \tilde{\rho}(u)$ when $u$ is large) and $\alpha(u)$, note that
$$
I(\xi)=\mathrm{Ei}(\xi)-\gamma - \log \xi,
$$
where $\mathrm{Ei}(z)$ is the exponential integral (\texttt{ExpIntegralEi[z]} in Mathematica, \texttt{real(-incgam(0,-z))} in PARI/GP for $z\in \mathbb{R}^+$), 
and
\begin{equation*}\label{eqximat}
\xi = \xi(u) =-W_{-1}\left(-\frac{e^{-1/u}}{u}\right)-\frac{1}{u},
\end{equation*}
where $W_{-1}(z)$ is the lower of the two real branches, for $-1/e\le z<0$,
of the Lambert W function (\texttt{ProductLog[-1,z]} in Mathematica, \texttt{lambertw(z,-1)} in PARI/GP).

Buchstab's function $\omega(u)$ is the unique continuous solution, for $u\ge 1$, of the delay differential equation
$(u\omega(u))'=\omega(u-1)$, with initial condition $\omega(u)=1/u$ for $1\le u \le 2$. 
It arises in connection with the distribution of integers free of small prime factors \cite[Ch.~III.6]{Ten}.
Corollary \ref{cor1} shows that $\tilde{\rho}(u)$ 
can serve as a convenient and numerically explicit upper bound for $|\omega(u)-e^{-\gamma}|$,
although it is not of the smallest possible order of magnitude \cite{Hil}. 
We obtain more accurate and numerically explicit estimates in \cite{WeinB}.

\begin{corollary}\label{cor1}
For $u\ge 1 $, we have 
$$
|\omega(u)-e^{-\gamma}| < \rho(u) < \tilde{\rho}(u),
\qquad
|\omega'(u)| \le  \rho(u) < \tilde{\rho}(u).
$$
\end{corollary}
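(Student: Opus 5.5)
We prove Corollary \ref{cor1} in two parts: first the two inequalities involving $\rho(u)$, which are classical facts about Buchstab's function, and then the comparison $\rho(u)<\tilde{\rho}(u)$, which comes from Theorem \ref{thm1}.

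\emph{The comparison with $\tilde{\rho}$.} For $u>1$ the upper bound in Theorem \ref{thm1} gives
$$\rho(u)<\tilde{\rho}(u)\Bigl(1-\frac{1}{12u(1+1/\log u)}\Bigr)<\tilde{\rho}(u),$$
since the subtracted term is positive. At $u=1$ we have $\xi=0$. We need the limiting value of $\tilde{\rho}$, where $I''(0)=1/2$, so that
$$\tilde{\rho}(1)=e^{\gamma}/\sqrt{\pi}\approx 1.0049>1=\rho(1).$$
Here $\tilde{\rho}(1)$ is read as its continuous extension, which I would state explicitly.

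\emph{The bound on $\omega'$.} I would use the classical identity: on $[1,\infty)$ one has $\omega'(u)=-\int \ldots$. A cleaner route is the known fact that $\omega(u)-e^{-\gamma}$ changes sign in every interval of length $1$. Since that is somewhat heavy, I prefer an inductive comparison.

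Put $f(u)=\omega'(u)$ for $u\ge 2$; on $(1,2)$ we have $\omega'(u)=-1/u^2$. Differentiating $(u\omega)'=\omega(u-1)$ gives $u\omega'(u)+\omega(u)=\omega(u-1)$. Differentiating once more gives
$$u\omega''(u)+2\omega'(u)=\omega'(u-1),$$
that is, $(u^2\omega'(u))'=u\,\omega'(u-1)$. Similarly $(u^2\rho(u))'=2u\rho(u)+u^2\rho'(u)=2u\rho(u)-u\rho(u-1)$.

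To compare, I would set $g(u)=u\rho(u)$. The key claim to prove is $|\omega'(u)|\le\rho(u)$ for all $u\ge1$, by induction over intervals $[k,k+1]$:

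\begin{itemize}
\item[(a)] \emph{Base case $1\le u\le 2$.} Here $|\omega'|=1/u^2$ and $\rho(u)=1-\log u$. The inequality $1/u^2\le 1-\log u$ fails at $u=2$, since $1/4>0.307$ is false; in fact $1/4<0.307$, so it holds there. Checking the function $h(u)=1-\log u-u^{-2}$: we have $h(1)=0$ and $h'(u)=-1/u+2/u^3$, which is positive for $u<\sqrt2$ and negative afterwards. Together with $h(2)>0$ this gives $h\ge0$ on $[1,2]$.
\item[(b)] \emph{Inductive step.} Integrate
$$u^2\omega'(u)=a^2\omega'(a)+\int_a^u t\,\omega'(t-1)\,dt$$
and bound it using the hypothesis. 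This should be matched against $\rho$ via $\rho(u)=\frac{1}{u}\int_{u-1}^u\rho(t)\,dt$, equivalently $u\rho(u)=\int_{u-1}^u\rho$.
\end{itemize}

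The analogous integral form for $\omega'$ is the part I expect to be the main obstacle. Concretely, I would differentiate $u\omega(u)=1+\int_1^{u-1}\omega(t)\,dt$ to obtain the exact analogue $u\omega'(u)=\omega(u-1)-\omega(u)$. I would then try to write $\omega(u)-\omega(u-1)=\int_{u-1}^u\omega'$, so that
$$u|\omega'(u)|\le\int_{u-1}^u|\omega'(t)|\,dt\le\int_{u-1}^u\rho(t)\,dt=u\rho(u).$$
This closes the induction, including the jump of $\omega'$ at $u=2$, where $\omega'$ is integrable.

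\emph{The bound on $\omega-e^{-\gamma}$.} With $E(u)=\omega(u)-e^{-\gamma}$, the relation $uE'(u)=E(u-1)-E(u)$ gives $(uE(u))'=E(u-1)$. The mean-value identity $\int_{u-1}^u E(t)\,dt=0$ (after the initial range) forces $E$ to vanish somewhere in each unit interval. Then $|E(u)|=\bigl|\int_{u_0}^u\omega'\bigr|$ for some $u_0\in[u-1,u]$ with $E(u_0)=0$, so
$$|E(u)|\le\int_{u-1}^u|\omega'(t)|\,dt\le u\rho(u),$$
which is too weak by a factor of $u$. I would instead run the same induction directly on $E$: $u|E(u)|\le\int_{u-1}^u|E|\le\int_{u-1}^u\rho=u\rho(u)$, using the integral identity $uE(u)=\int_{u-1}^uE(t)\,dt$ valid for $u\ge2$. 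The base case $1\le u\le2$ is the check $|1/u-e^{-\gamma}|<1-\log u$, which is elementary. Strictness then propagates through the induction.
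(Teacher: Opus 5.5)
Your proof of $\rho(u)<\tilde\rho(u)$ is correct, including the check $\tilde\rho(1)=e^{\gamma}/\sqrt{\pi}>1$ (here $\xi(1)=0$ is defined directly, so no continuous extension is needed). Your proof of $|\omega'(u)|\le\rho(u)$ via $u\omega'(u)=-\int_{u-1}^u\omega'(t)\,dt$ and $u\rho(u)=\int_{u-1}^u\rho$ is also sound, whereas the paper simply cites Tenenbaum for this bound. ``Closes the induction'' needs the routine sup argument, because $[u-1,u]$ overlaps the interval being treated.

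The gap is in the bound for $E(u)=\omega(u)-e^{-\gamma}$. The identity $uE(u)=\int_{u-1}^uE(t)\,dt$ is false. Differentiating, the left side gives $E(u-1)$, while the right side gives $E(u)-E(u-1)$. Concretely, at $u=2$ the left side is $1-2e^{-\gamma}\approx-0.123$ and the right side is $\log 2-e^{-\gamma}\approx0.132$. The analogy with $\rho$ breaks because $\rho$ satisfies $(u\rho)'=\rho(u)-\rho(u-1)$, whereas $E$ satisfies $(uE)'=E(u-1)$. Integrating the latter gives $uE(u)=2E(2)+\int_1^{u-1}E(t)\,dt=-\int_{u-1}^\infty E(t)\,dt$. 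That is a tail integral over future values, so it cannot drive a forward induction. Your ``mean-value identity'' $\int_{u-1}^uE=0$ is also impossible: it would force $E(u)=E(u-1)$, so $E$ would be periodic and, since $E\to0$, identically zero. So the strict bound $|\omega(u)-e^{-\gamma}|<\rho(u)$ is not established by your argument.

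The missing idea is to integrate $\omega'$ out to infinity rather than over a unit window. Since $\omega(u)\to e^{-\gamma}$, your bound on $\omega'$ gives $|E(u)|=\bigl|\int_u^\infty\omega'(t)\,dt\bigr|\le\int_u^\infty\rho(t)\,dt$. This tail is strictly smaller than $\rho(u)$. For $t>1$, $\rho$ is strictly decreasing, so $\rho(t)=\frac1t\int_{t-1}^t\rho(v)\,dv<\rho(t-1)/t=-\rho'(t)$. Hence $\int_u^\infty\rho<\int_u^\infty(-\rho')=\rho(u)$ for every $u\ge1$. This is the paper's argument; it needs no base case and gives strictness directly. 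Your abandoned attempt through a zero $u_0\in[u-1,u]$ of $E$ lost a factor of $u$ precisely because it integrated over a window behind $u$, where $\int_{u-1}^u\rho=u\rho(u)$. The rapidly decaying tail beyond $u$ is what makes the bound work.
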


\begin{proof}
The inequality $\rho(u)<\tilde{\rho}(u)$ follows from Theorem \ref{thm1}. 
Tenenbaum \cite[Thm.~III.6.6]{Ten} shows that $|\omega'(u)|\le \rho(u)$,
which implies 
$$|\omega(u)-e^{-\gamma}| = |\int_u^\infty \omega'(t) dt |\le \int_u^\infty \rho(t) dt < \int_u^\infty -\rho'(t) dt =\rho(u),$$
where the last inequality follows from
$$
\rho(t) = \frac{1}{t} \int_{t-1}^t \rho(v) dv < \frac{1}{t} \rho(t-1) = -\rho'(t) \qquad (t>1).
$$
\end{proof}

\section{Proof of Theorem \ref{thm2}}

We follow the proof of Tenenbaum \cite[Thm.~III.5.13]{Ten}, 
where Alladi's result \cite[(3.9)]{All} with a relative error of $O(u^{-1})$ is established
via the saddle point method. To obtain a relative error of $<0.005 u^{-2}$, 
we keep track of all constant factors, divide the range of integration into one extra interval,
 and use a Taylor polynomial of order seven instead of three near the saddle point. 
For small values of $u$, we verify the result by calculating $\rho(u)$ numerically as described in Section \ref{SecNum}.

We will need the following lemmas.

\begin{lemma}\label{lemIk}
For real $x>0$ and natural numbers $k$ we have 
$$
\frac{e^x -1}{x} \ge I^{(k)}(x) \ge I^{(k+1)}(x) \quad (k\ge 1),
$$
$$
I^{(k)}(x) \ge \frac{e^x}{x} \left(1-\frac{k-1}{x}\right) \qquad (k\ge 2),
$$
$$
I^{(k)}(x) \le  \frac{e^x}{x} \left(1-\frac{k-1}{x}+\frac{(k-1)(k-2)}{x^2}\right) \qquad (k\ge 3).
$$
\end{lemma}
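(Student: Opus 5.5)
The plan is to use the integral representation
$$
I'(x)=\frac{e^x-1}{x}=\int_0^1 e^{xt}\,dt ,
$$
which gives, after differentiating under the integral sign,
$$
I^{(k)}(x)=\int_0^1 t^{k-1}e^{xt}\,dt \qquad (k\ge 1).
$$
All three claims then become statements about the moments $J_m(x):=\int_0^1 t^m e^{xt}\,dt$ with $m=k-1\ge 0$.

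\emph{First chain.} Since $0\le t\le 1$, we have $t^{k}\le t^{k-1}\le 1$ on $[0,1]$. Integrating against $e^{xt}>0$ gives
$$
\frac{e^x-1}{x}=J_0\ge J_{k-1}=I^{(k)}(x)\ge J_k=I^{(k+1)}(x).
$$

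\emph{Two-sided bounds.} I would integrate by parts repeatedly, with $m=k-1$:
$$
J_m=\frac{e^x}{x}-\frac{m}{x}J_{m-1} \qquad (m\ge 1).
$$
Applying this once more gives
$$
J_m=\frac{e^x}{x}-\frac{m}{x}\left(\frac{e^x}{x}-\frac{m-1}{x}J_{m-2}\right)
=\frac{e^x}{x}\left(1-\frac{m}{x}\right)+\frac{m(m-1)}{x^2}J_{m-2},
$$
valid for $m\ge 2$, i.e.\ $k\ge 3$. The last term is nonnegative, which yields the lower bound $I^{(k)}(x)\ge \frac{e^x}{x}\bigl(1-\frac{k-1}{x}\bigr)$ for $k\ge 3$. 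For $k=2$ ($m=1$) the single step gives $J_1=\frac{e^x}{x}-\frac{1}{x}J_0$. Using $J_0=(e^x-1)/x\le e^x/x$, this yields $J_1\ge \frac{e^x}{x}(1-\frac1x)$. So the lower bound holds for all $k\ge 2$.

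For the upper bound with $k\ge 3$, I would bound the remainder $J_{m-2}$ by $J_0\le e^x/x$ (from the first chain, or directly). This gives
$$
I^{(k)}(x)\le \frac{e^x}{x}\left(1-\frac{k-1}{x}+\frac{(k-1)(k-2)}{x^2}\right).
$$

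I do not expect a real obstacle. The only care needed is checking the index ranges ($k=2$ separately for the lower bound) and the signs of the dropped terms. The integral representation makes every step a positivity argument.
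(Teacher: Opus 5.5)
Your proposal is correct and matches the paper's proof: the representation $I^{(k)}(x)=\int_0^1 h^{k-1}e^{hx}\,dh$ gives the monotone chain. One or two integrations by parts then give the two-sided bounds, with the dropped remainder controlled by $0< J_{m-2}\le J_0<e^x/x$; you also treat the case $k=2$ of the lower bound separately and correctly.
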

\begin{proof}
For $k\ge 1$, we have $I^{(k)}(x) = \int_0^1 h^{k-1} e^{hx} dh$, which implies the first inequality.
The other two follow from applying integration by parts to this integral.
\end{proof}

\begin{lemma}\label{lemxi}
For $u\ge 2$ we have 
$$
\log (u\log u) < \xi(u) < \log (u\log u) +1.
$$
\end{lemma}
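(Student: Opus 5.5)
Let $f(x)=e^x-1-ux$ on $x>0$. Since $f(0)=0$ and $f$ is convex, with $f'(x)=e^x-u$ negative for $x<\log u$, $f$ has exactly one positive zero, namely $\xi(u)$. Moreover $f<0$ on $(0,\xi)$ and $f>0$ on $(\xi,\infty)$. The plan is therefore to bracket $\xi$ by checking signs: show $f(a)<0$ for $a=\log(u\log u)$ and $f(b)>0$ for $b=\log(u\log u)+1$, both for $u\ge 2$.

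Note first that $\log 2\approx 0.693$, so $\log u\log 2\approx0.48$ for $u\ge2$, and hence $a>0$. Also $b>0$, so both points lie in the region where the sign criterion applies.

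\emph{Lower bound.} We have $f(a)=u\log u-1-u\log u-u\log\log u=-1-u\log\log u$. For $u\ge e$ this is clearly negative, since then $\log\log u\ge 0$. For $2\le u<e$, $\log\log u$ is negative and we need $u\log(1/\log u)<1$. The function $g(u)=-u\log\log u$ is decreasing on $[2,e]$: its derivative is $-\log\log u-1/\log u$, and because $t\log(1/t)\le 1/e<1$ on $(0,1)$ this derivative is negative. So it suffices to check $u=2$, where $g(2)=-2\log(0.693)\approx0.733<1$. Hence $f(a)<0$.

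\emph{Upper bound.} We have $f(b)=e\,u\log u-1-u\log u-u\log\log u-u=u\bigl((e-1)\log u-\log\log u-1\bigr)-1$. Put $t=\log u\ge\log2$ and $h(t)=(e-1)t-\log t-1$. Then $h'(t)=e-1-1/t$, so $h$ is minimized at $t=1/(e-1)\approx0.582<\log 2$. Thus $h$ is increasing on $[\log2,\infty)$, and $h(\log 2)\approx 1.191+0.367-1=0.558$. This gives $f(b)\ge 2(0.558)-1>0$ at $u=2$, and $f(b)$ grows for larger $u$, since $u\,h(\log u)$ is a product of increasing positive factors. So $f(b)>0$.

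The only delicate point is the small range $2\le u<e$ in the lower bound, where $\log\log u<0$. There the monotonicity argument above reduces everything to the single numerical check at $u=2$.
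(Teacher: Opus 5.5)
Your proof is correct and is essentially the argument the paper intends: you bracket $\xi$ by checking the sign of $e^x-1-ux$ at the two endpoints, which is equivalent to the monotonicity of $(e^x-1)/x$ used in Tenenbaum's Lemma III.5.11. One small slip: the remark ``$\log u\log 2\approx 0.48$'' is garbled. Positivity of $\log(u\log u)$ actually follows from $u\log u\ge 2\log 2\approx 1.386>1$, and this does not affect the rest of the argument.
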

\begin{proof}
This is an easy exercise, following the same reasoning as in the proof of \cite[Lem.~III.5.11]{Ten}.
\end{proof}

\begin{lemma}\label{lemtail}
For real $\delta >0$, $A>0$, and $1\le m < \delta^2 A+1$, we have
$$
\int_\delta^\infty t^m e^{-A t^2/2} dt \le \frac{\delta^{m+1}}{\delta^2 A - (m-1)} e^{-A\delta^2/2}.
$$
\end{lemma}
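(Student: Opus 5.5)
The idea is to compare $t^m e^{-At^2/2}$ with an exact derivative. Every term of that derivative should be a product of a power of $t$ and the Gaussian factor. Set
$$
F(t) := -\frac{t^{m-1}}{A}\, e^{-At^2/2}.
$$
Differentiating gives
$$
F'(t) = t^m e^{-At^2/2} - \frac{(m-1)}{A} t^{m-2} e^{-At^2/2}.
$$
Integrating over $[\delta,\infty)$ and using $F(t)\to 0$ as $t\to\infty$ yields the identity
$$
J_m := \int_\delta^\infty t^m e^{-At^2/2}\,dt = \frac{\delta^{m-1}}{A}e^{-A\delta^2/2} + \frac{m-1}{A}\int_\delta^\infty t^{m-2} e^{-At^2/2}\,dt .
$$
This identity is just integration by parts with $u=t^{m-1}$ and $dv = t e^{-At^2/2}dt$.

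Next I bound the remaining integral by $J_m$ itself. Since $m\ge 1$, the coefficient $(m-1)/A$ is nonnegative. On $[\delta,\infty)$ we have $t^{m-2} = t^m/t^2 \le t^m/\delta^2$, so
$$
\int_\delta^\infty t^{m-2} e^{-At^2/2}\,dt \le \frac{1}{\delta^2} J_m .
$$
Substituting this into the identity gives
$$
J_m\left(1-\frac{m-1}{A\delta^2}\right) \le \frac{\delta^{m-1}}{A}e^{-A\delta^2/2}.
$$

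The hypothesis $m<\delta^2A+1$ says exactly that $1-\frac{m-1}{A\delta^2}>0$, so I may divide by this factor. Multiplying numerator and denominator by $\delta^2 A$ then gives
$$
J_m \le \frac{\delta^{m+1}}{\delta^2A-(m-1)}e^{-A\delta^2/2},
$$
which is the claimed bound.

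The only points needing care are the following:
\begin{enumerate}[(i)]
\item $J_m$ is finite and the boundary term at infinity vanishes, both because the Gaussian decay dominates any power of $t$.
\item The sign of $m-1$ is nonnegative, which justifies the inequality direction when replacing $t^{m-2}$ by $t^m/\delta^2$.
\item The denominator is positive, which is exactly the hypothesis on $m$.
\end{enumerate}
The case $m=1$ is consistent with this argument: the correction term vanishes and the bound holds with equality.
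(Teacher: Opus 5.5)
Your proof is correct, and it takes a different route from the paper's. The paper does not argue directly. It quotes the incomplete gamma bound $\Gamma(a,x)\le x^a e^{-x}/(x-(a-1))$, valid for $x>a-1\ge 0$, and substitutes $s=At^2/2$. This turns the integral into $\frac12 (2/A)^{(m+1)/2}\,\Gamma\bigl(\tfrac{m+1}{2},\tfrac{A\delta^2}{2}\bigr)$, and the conditions $x>a-1\ge 0$ become exactly $1\le m<A\delta^2+1$.

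Under that same substitution, your two steps become the standard derivation of the gamma bound itself:
\begin{itemize}
\item your integration-by-parts identity is the recurrence $\Gamma(a,x)=x^{a-1}e^{-x}+(a-1)\Gamma(a-1,x)$;
\item your comparison $t^{m-2}\le t^m/\delta^2$ is the estimate $\Gamma(a-1,x)\le \Gamma(a,x)/x$.
\end{itemize}

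So both arguments rest on the same mechanism. The paper's version is a one-line reduction to a reference. Yours is self-contained and works for real $m\ge 1$ without changing variables. It also shows where each hypothesis enters: $m\ge 1$ fixes the direction of the inequality, and $m<A\delta^2+1$ allows the final division. It additionally exposes the equality case $m=1$.
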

\begin{proof}
This follows from a change of variables in the bound \cite[Prop.~2.7]{Pin} 
$$
\Gamma(a,x):=\int_x^\infty t^{a-1} e^{-t} dt \le \frac{x^a e^{-x}}{x-(a-1)}
$$
for real $x$, $a$ with $x>a-1\ge 0$.
\end{proof}

\begin{proof}[Proof of Theorem \ref{thm2}]

As in \cite{Ten}, we evaluate the inverse Laplace integral
\begin{equation}\label{eqlapinv}
\begin{split}
\rho(u) & =\frac{1}{2\pi i} \int_{-\xi-i\infty}^{-\xi+i\infty} \widehat{\rho}(s) e^{us} ds
=\frac{1}{2\pi i} \int_{-\xi-i\infty}^{-\xi+i\infty}  e^{\gamma + I(-s)+ us} ds \\
& =\frac{e^{\gamma-u\xi + I(\xi)}}{2\pi} \int_{-\infty}^{\infty} e^{I(\xi-i\tau)-I(\xi)+ui\tau} d\tau,
\end{split}
\end{equation}
where $s=-\xi+i\tau$. 

Let $\delta:= \sqrt{8\log(u)/u}$
and define
$$
H(\tau):=I(\xi)-\mathrm{Re}(I(\xi-i\tau)) =\int_0^1 e^{h\xi} \frac{1-\cos(h\tau)}{h} dh.
$$

When $\delta\le |\tau| \le 1$, we have $1-\cos(h\tau)\ge 0.45 \tau^2 h^2$ for $0\le h\le 1$, so
$$H(\tau) \ge 0.45\tau^2 \int_0^1 h e^{h\xi} dh = 0.45 I''(\xi) \tau^2$$
and the contribution to \eqref{eqlapinv} is
$$
< \frac{1}{\pi} e^{\gamma-u\xi + I(\xi)} \int_\delta^1 \exp(-0.45 I''(\xi) \tau^2)d\tau 
< \frac{1}{\pi} e^{\gamma-u\xi + I(\xi)} \frac{\exp(-0.45 I''(\xi)\delta^2)}{2\cdot 0.45 I''(\xi)\delta},
$$
hence
$$
<  \tilde{\rho}(u)  \frac{\exp(-0.45 I''(\xi)\delta^2)}{0.45 \delta \sqrt{2 \pi I''(\xi)}} =: \tilde{\rho}(u) E_1(u).
$$

Similarly, when $1 \le |\tau| \le \pi$, we have $1-\cos(h\tau) \ge 2\tau^2 h^2/\pi^2$, which leads to a contribution of
$$
<  \tilde{\rho}(u)  \frac{\exp(-2I''(\xi)/\pi^2)}{(2/\pi^2) \sqrt{2 \pi I''(\xi)}} =: \tilde{\rho}(u) E_2(u).
$$

For the contribution from $\pi \le |\tau| \le 1+u\xi$, we find as in \cite{Ten} that
$$
H(\tau)\ge \frac{u \pi^2}{2(\pi^2+\xi^2)}-\frac{2}{\sqrt{\pi^2+\xi^2}}=: \eta(u),
$$
so that these segments contribute
$$
<\frac{1}{\pi} e^{\gamma-u\xi + I(\xi)} (1+u\xi-\pi) e^{-\eta(u)},
$$
hence
$$
< \tilde{\rho}(u) \sqrt{2I''(\xi)/\pi} (1+u\xi-\pi) e^{-\eta(u)}=: \tilde{\rho}(u) E_3(u).
$$

When $|\tau|\ge 1 +u\xi$ and $s=-\xi+i\tau$, we have $\widehat{\rho}(s)=e^{-J(s)}/s$ by \cite[(5.51)]{Ten}, where 
$$
|J(s)| := \left|\int_0^\infty \frac{e^{-s-t}}{s+t} dt \right|\le \frac{e^{\xi}}{|\tau|}=\frac{1+u\xi}{|\tau|} \le 1.
$$ 
Thus
$$
|e^{-J(s)}-1| \le |J(s)| (e-1) \le (e-1)\frac{1+u\xi}{|\tau|}
$$
and
$$
\widehat{\rho}(s) = \frac{1}{s}+ \frac{\nu_{\tau,u}}{s}(e-1) \frac{1+u\xi}{|\tau|},
$$
where $|\nu_{\tau,u}|\le 1$.
Substituting this into \eqref{eqlapinv} and applying integration by parts to the first term, 
we find the that contribution from $|\tau|\ge 1 +u\xi$ to \eqref{eqlapinv} is 
$$
\le \frac{e^{-u\xi}}{\pi}\left(\frac{2}{u(1+u\xi)}+(e-1)\right) < \frac{e^{-u\xi}}{\pi}(e+1),
$$
hence
$$
< \tilde{\rho}(u) (e+1)e^{-\gamma-I(\xi)} \sqrt{2I''(\xi)/\pi} =: \tilde{\rho}(u) E_4(u).
$$

For $-\delta \le \tau \le \delta$, we use the Taylor expansion of $I(\xi-i\tau)$ around $\tau=0$. Since
$$
|I^{(k)}(\xi-i\tau)|=|\int_0^1 h^{k-1} e^{h(\xi-i\tau)} dh | \le I^{(k)}(\xi) \le I'(\xi) = u,
$$
for $k\ge 1$, we have
$$
I(\xi-i\tau)-I(\xi)+ui\tau = \sum_{k=2}^7 \frac{I^{(k)}(\xi)}{k!}(-i\tau)^k + \frac{I^{(8)} (\xi)\ \mu_{u,\tau}}{8!} \tau^8,
$$
where $|\mu_{\tau,u}|\le 1$. 
Thus,
\begin{equation}\label{eqhint}
\int_{-\delta}^{\delta} e^{I(\xi-i\tau)-I(\xi)+ui\tau }d\tau = \int_{-\delta}^\delta e^{-I''(\xi)\tau^2/2}h(u,\tau)d\tau, 
\end{equation}
where 
$$
h(u,\tau):= \exp\left\{ \sum_{k=3}^7 \frac{I^{(k)}(\xi)}{k!}(-i\tau)^k + \frac{I^{(8)} (\xi)\mu_{u,\tau}}{8!} \tau^8\right\}.
$$
Writing $M$ for the exponent, we have
$$
h(u,\tau)=e^M=\sum_{j=0}^5 \frac{M^j}{j!}+\frac{1.1 M^6 \nu_M}{6!} \quad (\mathrm{Re}( M)  \le \log 1.1),
$$
where $|\nu_M|\le 1$. 
Since $|\tau|\le \delta $ and $I^{(k+1)}(\xi)\le I^{(k)}(\xi)$, we have
\begin{equation*}\label{MUB}
\mathrm{Re}( M) \le  \frac{I^{(4)} (\xi)}{4!}\tau^4 +\frac{I^{(8)} (\xi)}{8!}\tau^8  
 \le \frac{I^{(4)} (\xi) \delta^4}{4!}\left(1+\frac{\delta^4}{1680}\right),
\end{equation*}
which implies that $\mathrm{Re}( M)<\log 1.1$ for $u\ge 980$.

Let $h_k$ be $M^k$ without all the terms that contribute zero to the integral in \eqref{eqhint} and write
$$
I_k:=I^{(k)}(\xi), \qquad a_k:= \frac{I^{(k)}(\xi)}{k!}=\frac{I_k}{k!}.
$$
We have
$$h_0=1,$$ 
$$
h_1=a_4 \tau^4 - a_6  \tau^6 +   \theta_1 \tau^8 R_1(\tau),
$$
$$
h_2= -a_3^2 \tau^6 +2a_3a_5\tau^8 +a_4^2 \tau^8
- \tau^{10} S_2
+\theta_2  \tau^{10} R_2(\tau),
$$
$$
h_3=-3a_3^2 a_4\tau^{10} +     \tau^{12} S_3 +  \theta_3   \tau^{12} R_3(\tau)
$$
$$
h_4=a_3^4 \tau^{12} - \tau^{14} S_4 + \theta_4  \tau^{14} R_4(\tau),
$$
$$
h_5 =\tau^{16} S_5 + \theta_5 \tau^{16} R_5(\tau),
$$
where $|\theta_i|\le 1$ and $S_1=0$, $R_1(\tau):=I_8/8!$, 
$$
S_2:= \sum_{3\le i,j\le 8\atop \ i+j = 10 }\frac{I_i I_j}{i!j!},\quad
R_2(\tau):= \sum_{3\le i,j\le 8, \ i+j \ge 11 \atop  i+j \equiv 0 \bmod 2 \text{ or } \max(i,j)=8}\frac{\tau^{i+j-10}}{i!j!}I_i I_j,
$$
$$
S_3:=\sum_{3\le i,j,k\le 8, \atop  i+j+k = 12  }\frac{I_i I_j I_k}{i!j!k!} , \quad
R_3(\tau):=\sum_{3\le i,j,k\le 8, \ i+j+k \ge 13 \atop  i+j +k\equiv 0 \bmod 2 \text{ or } \max(i,j,k)=8}\frac{\tau^{i+j+k-12}}{i!j!k!} I_i I_j I_k,
$$
et cetera.

With the help of Lemma \ref{lemIk}, we verify that $I_8 < I_2^4 u^{-3}$ for $u\ge 14=:u_1$; 
each product $I_i I_j$ appearing in $S_2$ or $R_2$ satisfies $I_i I_j < I_2^5  u^{-3}$ for $u\ge 5=:u_2$;
each product $I_i I_j I_k$ appearing in $S_3$ or $R_3$ satisfies $I_i I_j I_k < I_2^6  u^{-3}$ for $u\ge 3=:u_3$, et cetera,
with $u_4=u_5=1$. 
Since $|\tau|\le \delta \le 1/4$, which holds for $u\ge 870$, we have $|R_j(\tau)| \le R_j(1/4) $.
Let $ \alpha_j, \beta_j$ denote the values of the
sums $S_j$ and $R_j(1/4)$, respectively, but without the products of the $I$'s. Then
$$
0\le  S_j \le \alpha_j I_2^{j+3} u^{-3},\quad |R_j(\tau)| \le \beta_j  I_2^{j+3} u^{-3}.
$$
To exploit the alternating $\pm$ pattern in front of the $S_j$'s, we define $\gamma_1^+=\gamma_1^-=\beta_1=1/8!$; 
for $1\le j \le 2$, let  
$\gamma^+_{2j}:=\beta_{2j}$, 
$\gamma^-_{2j}:=\alpha_{2j}+\beta_{2j}$,
$\gamma^+_{2j+1}:=\alpha_{2j+1}+\beta_{2j+1}$,
$\gamma^-_{2j+1}:=\beta_{2j+1}$.

Since $I_{k+1}\le I_k$, we have
$$
|M|\le \sum_{k=3}^8 \frac{I_k}{k!}|\tau|^k \le \frac{I_3 |\tau|^3}{3!} 1.065761 \quad  (|\tau|\le \delta \le 1/4),
$$
and therefore
$$
|1.1 M^6| \le \frac{I_3^6 \tau^{18}}{(3!)^6} 1.612 <  \frac{u^{-3} I_2^9 \tau^{18}}{(3!)^6} 1.612
 =: u^{-3} I_2^9 \tau^{18} \gamma_6,
$$
as $I_3^6< u^{-3}I_2^9$ for $u\ge 1$. 
With $\gamma_6^+=\gamma_6^-=\gamma_6$, 
the contribution of the $S_j$'s, the $R_j$'s and $|1.1 M^6|$ to \eqref{eqhint} is 
\begin{multline*}
< \sum_{k=1}^6 \frac{\gamma_k^+  I_2^{3+k}}{u^3 k!} \int_{-\infty}^\infty e^{-I''(\xi)\tau^2/2} \tau^{6+2k} d\tau
= \sqrt{\frac{2\pi}{I''(\xi)}}\sum_{k=1}^6 \frac{\gamma_k^+  I_2^{3+k}}{u^3 k!} \frac{(5+2k)!!}{I_2^{3+k}}\\
< \sqrt{\frac{2\pi}{I''(\xi)}}\frac{1}{u^3}\sum_{k=1}^6 \frac{\gamma_k^+ (5+2k)!!}{k!}
< \sqrt{\frac{2\pi}{I''(\xi)}} \frac{5.39}{u^3},
\end{multline*}
and, similarly, it is
$$
>- \sqrt{\frac{2\pi}{I''(\xi)}}\frac{1}{u^3}\sum_{k=1}^6 \frac{\gamma_k^- (5+2k)!!}{k!}
>- \sqrt{\frac{2\pi}{I''(\xi)}} \frac{4.37}{u^3},
$$
where $n!!$ is the product of all the positive integers up to $n$ that have the same parity as $n$.

Writing the main terms as 
$g(\tau):=1+\sum_{j=2}^6 c_j(u) \tau^{2j}$, Lemma \ref{lemtail} yields
$$
\int_\delta^\infty  e^{-I''(\xi)\tau^2/2} |g(\tau)| d\tau 
\le \frac{e^{-I''(\xi) \delta^2/2}}{I''(\xi) \delta} \left(1 + \frac{3}{2}\sum_{j=2}^6 |c_j(u)| \delta^{2j} \right) , 
$$
provided $I''(\xi)\delta^2 \ge 33$, which is the case if $u\ge 127$. 
We find that
$$
1 + \frac{3}{2}\sum_{j=2}^6 |c_j(u)| \delta^{2j} \le 7
$$
for $u\ge 380$, which implies
$$
2\int_\delta^\infty  e^{-I''(\xi)\tau^2/2}|g(\tau)| d\tau 
< \frac{ 14e^{-I''(\xi) \delta^2/2}}{ \delta \sqrt{2\pi I''(\xi)} }  \sqrt{\frac{2\pi}{ I''(\xi)}}
=: E_5(u)  \sqrt{\frac{2\pi}{ I''(\xi)}}.
$$

Now $f(u):=u^3\sum_{k=1}^5 E_k(u)=u^{3-(0.45)8+o(1)}=u^{-0.6+o(1)}$ has a limit of zero and is decreasing for $u\ge 70$, while 
$f(922)<0.05$. 

Combining everything, we obtain 
\begin{equation}\label{eqfinrho}
\rho(u) = \tilde{\rho}(u) \left(1+\alpha(u) +\beta(u)+\lambda(u) u^{-3}\right),
\end{equation}
where $|\lambda(u)|<5.39 + 0.05 =5.44$ for $u\ge 980$,
$$
\alpha(u) := \frac{a_4 3!!}{I''(\xi)^2} - \frac{a_3^2 5!!}{2! I''(\xi)^3}=\frac{I^{(4)}(\xi)}{8 I''(\xi)^2}-\frac{5 I^{(3)}(\xi)^2}{24 I''(\xi)^3}
\sim -\frac{1}{12u},
$$
\begin{multline*}
\beta(u):=-\frac{a_6 5!!}{I''(\xi)^3}+\frac{(2 a_3 a_5 + a_4^2) 7!!}{2! I''(\xi)^4}- \frac{3a_3^2 a_4 9!!}{3! I''(\xi)^5}
+\frac{a_3^4 11!!}{4! I''(\xi)^6}\\
=
-\frac{I^{(6)}(\xi)}{48 I''(\xi)^3}
+\frac{7 I^{(3)}(\xi) I^{(5)}(\xi)}{48  I''(\xi)^4}
+\frac{35 I^{(4)}(\xi)^2}{384 I''(\xi)^4}
-\frac{35 I^{(3)}(\xi)^2 I^{(4)}(\xi)}{64 I''(\xi)^5}
+\frac{385 I^{(3)}(\xi)^4}{1152 I''(\xi)^6}\\
 \sim \frac{1}{288 u^2}.
\end{multline*}

Lemma \ref{lemIk} shows that $u^2 \beta(u)=  \frac{1}{288}(1-2/\xi +O(1/\xi^2))$,
so that $\lim_{u\to \infty} u^2 \beta(u) = \frac{1}{288}$. Moreover, $u^2 \beta(u)$ is increasing and positive for $u\ge 7$,
so $|u^2 \beta(u) |< \frac{1}{288}$ for $u\ge 7$. 
From \eqref{eqfinrho} we obtain
$$|\theta(u)|:= |u^2(\beta(u) + \lambda(u) u^{-3}) | <\frac{1}{288} + \frac{5.5}{u}.$$
Thus, $|\theta(u)|<0.005$ for $u\ge 3600$. 
For $5\le u \le 3600$, we calculate $\rho(u)$ (resp. $\rho(u)/\tilde{\rho}(u)$) numerically as described in Section \ref{SecNum} and graph
$\theta(u):= (\rho(u)/\tilde{\rho}(u) -1-\alpha(u)) u^2$, to find that $|\theta(u)|\le 0.0036$ in this range.

We have $\theta(u)=u^2\beta(u) +\lambda(u)/u >u^2\beta(u)-5.5/u$ for $u\ge 980$, which implies $\theta(u)>0$ for $u\ge 2250$. 
For $9\le u\le 2250$, we verify that $\theta(u)>0$ by computing $\rho(u)$ as described in Section \ref{SecNum} and graphing $\theta(u)$. 
\end{proof}

In proving Theorem \ref{thm2}, we also proved Theorem \ref{thm3}, where the range $1\le u\le 980$ is again 
established by calculating $\rho(u)$ as in Section \ref{SecNum}. The upper bound $5.44$ is likely far from best possible.
\begin{theorem}\label{thm3}
Equation \eqref{eqfinrho} holds with $|\lambda(u)|<5.44$ for $u\ge 1$. 
\end{theorem}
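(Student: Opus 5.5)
The statement splits at $u=980$. For $u\ge 980$ there is nothing new to do: it is exactly the conclusion of the proof of Theorem \ref{thm2}, where \eqref{eqfinrho} was obtained with $|\lambda(u)|<5.39+0.05=5.44$. The only task is to collect the ranges of validity used there. These are: $\mathrm{Re}(M)<\log 1.1$ for $u\ge 980$; $\delta\le 1/4$ for $u\ge 870$; the product bounds $I_iI_j\cdots<I_2^{j+3}u^{-3}$ for $u\ge u_1=14$; $I''(\xi)\delta^2\ge 33$ for $u\ge 127$; the bound $1+\frac32\sum|c_j|\delta^{2j}\le 7$ for $u\ge 380$; and monotonicity of $f$ for $u\ge 70$ together with $f(922)<0.05$. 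All of them hold for $u\ge 980$, so this case is settled by citing that proof.

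For $1\le u\le 980$ the plan is purely computational. Define $\lambda(u):=u^3\bigl(\rho(u)/\tilde\rho(u)-1-\alpha(u)-\beta(u)\bigr)$, which is forced by \eqref{eqfinrho}, and check $|\lambda(u)|<5.44$ on this interval. Here $\rho(u)$ is computed numerically as in Section \ref{SecNum}. The quantities $\tilde\rho(u)$, $\alpha(u)$, $\beta(u)$ come from closed forms: $\xi$ via the Lambert $W$ expression, $I(\xi)=\mathrm{Ei}(\xi)-\gamma-\log\xi$, and the derivatives $I^{(k)}(\xi)=\int_0^1h^{k-1}e^{h\xi}\,dh$, each obtained recursively from $I'(s)=(e^s-1)/s$.

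The step I expect to be delicate is the behaviour near $u=1$. There $\xi\to 0$ and $I''(\xi)\to 1/2$, so $\alpha$ and $\beta$ stay bounded while $\rho/\tilde\rho$ tends to a finite limit. Hence $\lambda(u)$ is bounded near $1$, but its value there must be checked explicitly, since the factor $u^3$ does not help. For small $\xi$ I would evaluate the $I^{(k)}$ by their Taylor series $\sum_{n\ge0}\xi^n/(n!(n+k))$ to avoid cancellation. I would also check the limit $u\to1^+$ directly: $\rho(1)=1$, and $\tilde\rho(1)=e^{\gamma}/\sqrt{\pi}$ because $I''(0)=1/2$.

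On the rest of $[1,980]$ the size of $|\lambda|$ is dictated by $u^3$ times a relative error of order $u^{-3}$, so I expect the values to stay well below $5.44$. Concretely, I would tabulate $\lambda$ on a fine grid, graph it, and use the smoothness of all quantities involved (or a crude derivative bound) to pass from grid values to the whole interval. This is the same kind of numerical verification already used for $\theta$ in Theorem \ref{thm2}, so no new analytic input is needed.
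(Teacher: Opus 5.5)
Your proposal is correct and matches the paper's route: the case $u\ge 980$ is the bound $|\lambda(u)|<5.39+0.05=5.44$ already obtained in the proof of Theorem~\ref{thm2}. The range $1\le u\le 980$ is handled, as in the paper, by computing $\rho(u)$ as in Section~\ref{SecNum} and checking $\lambda(u)=u^3\bigl(\rho(u)/\tilde\rho(u)-1-\alpha(u)-\beta(u)\bigr)$ numerically; your extra check at $u=1$, where $\lambda(1)=\sqrt{\pi}e^{-\gamma}-1-\alpha(1)-\beta(1)\approx 0.06$, is a harmless refinement of that same verification.
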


\section{Proof of Theorem \ref{thm1}}

\begin{lemma}\label{lemAx}
For $u\ge 2$ we have 
$$
 -\frac{\xi}{12 e^\xi}\left(1-\frac{1}{\xi}+\frac{1.75}{\xi^2}\right)< \alpha(u)< -\frac{\xi}{12 e^\xi}\left(1-\frac{1}{\xi}+\frac{0.5}{\xi^2}\right) .
$$
\begin{proof}
Let
$$
A(x):=\frac{1}{8} \frac{I^{(4)}(x)}{(I^{(2)}(x))^2} - \frac{5}{24} \frac{(I^{(3)}(x))^2}{(I^{(2)}(x))^3} ,
$$
so that $\alpha(u)=A(\xi(u))$, and
define $\kappa(x)$ by 
$$
A(x) = -\frac{x}{12 e^x}\left(1-\frac{1}{x}+\frac{\kappa(x)}{x^2}\right).
$$
Lemma \ref{lemIk} shows that $\kappa(x)$ is bounded.
A calculus exercise, aided by a computer, reveals that 
$\kappa(x)$ is increasing on the interval $(0, 2.749...)$, where it reaches its maximum of $1.746...$ at $x=2.749...$, and decreasing 
on $(2.749..., \infty)$ with $\lim_{x\to \infty} \kappa(x) = 1/2$. We have $\kappa(1/3)=0.511...>0.5$. 
The result now follows since $\xi >1/3$ for $u\ge 2$. 
\end{proof}
\end{lemma}

\begin{lemma}\label{lemalpha}
For $u\ge 1$ we have 
$$
-\frac{1}{12 u}
<\alpha(u)-\frac{0.02}{u^2} < \alpha(u)+\frac{0.02}{u^2} 
< - \frac{1}{12 u (1+1/\log u)}.
$$
\end{lemma}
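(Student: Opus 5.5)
The middle inequality is trivial, so two inequalities remain: the lower bound $\alpha(u) > -\frac{1}{12u} + \frac{0.02}{u^2}$ and the upper bound $\alpha(u) < -\frac{1}{12u(1+1/\log u)} - \frac{0.02}{u^2}$. I will pass to the variable $\xi$ using $e^\xi = 1+u\xi$, i.e. $u = (e^\xi-1)/\xi$, and then feed in Lemma \ref{lemAx}, which sandwiches $\alpha(u)$ between two explicit functions of $\xi$. For large $u$ (say $u\ge u_0$, with $u_0$ a moderate value such as $u_0 = 20$ to be fixed later) I will prove both inequalities analytically. For $1 < u \le u_0$ I will check them numerically, exactly as the paper handles small $u$ elsewhere, by evaluating $\alpha(u)$ through $\xi(u)$ and the closed-form derivatives $I^{(k)}$. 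The point $u=1$ is the degenerate case $\xi = 0$, to be treated by continuity or handled separately, since $1/\log u$ is undefined there.

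\emph{Lower bound.} Lemma \ref{lemAx} gives $\alpha(u) > -\frac{\xi}{12e^\xi}\bigl(1 - \frac{1}{\xi} + \frac{1.75}{\xi^2}\bigr)$. Since $e^\xi = 1+u\xi$, we have $\frac{\xi}{e^\xi} = \frac{1}{u}\cdot\frac{u\xi}{1+u\xi} = \frac{1}{u}\bigl(1 - \frac{1}{1+u\xi}\bigr)$. So $-12u\,\alpha(u) < \bigl(1-\frac{1}{e^\xi}\bigr)\bigl(1 - \frac{1}{\xi} + \frac{1.75}{\xi^2}\bigr)$, and it suffices to show
$$
\Bigl(1-\frac{1}{e^\xi}\Bigr)\Bigl(1 - \frac{1}{\xi} + \frac{1.75}{\xi^2}\Bigr) \le 1 - \frac{0.24}{u}.
$$
For $\xi \ge 2$ the factor $1 - 1/\xi + 1.75/\xi^2$ is at most $1 - 0.0625$, which leaves a large margin. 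The right-hand side is $1 - O(1/u)$, and $1/u \le \xi e^{-\xi}$ is tiny. This case is therefore easy once $\xi \ge 2$, i.e. roughly $u \ge 3.2$. Below that I will use numerics.

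\emph{Upper bound.} Here the target is sharper, and I expect this to be the main obstacle. From Lemma \ref{lemAx}, $-12u\,\alpha(u) > \bigl(1 - \frac{1}{e^\xi}\bigr)\bigl(1 - \frac{1}{\xi} + \frac{0.5}{\xi^2}\bigr)$, and I need this to exceed
$$
\frac{1}{1+1/\log u} + \frac{0.24}{u} = 1 - \frac{1}{1+\log u} + \frac{0.24}{u}.
$$
The comparison rests on $\xi$ versus $1 + \log u$. By Lemma \ref{lemxi}, $\xi > \log(u\log u) = \log u + \log\log u$, which exceeds $1+\log u$ once $\log\log u > 1$, i.e. $u > e^e \approx 15.2$. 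For such $u$ we get $1/\xi < 1/(1+\log u)$, which gives a gain of order $(\log\log u - 1)/\log^2 u$. Together with the $+0.5/\xi^2$ term, this gain must absorb the losses $e^{-\xi} < 1/(u\log u)$ and $0.24/u$. Both losses are $O(1/u)$, which is far smaller than $1/\log^2 u$ for $u$ beyond a modest threshold.

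I will make this quantitative with the crude bounds $\xi < \log(u\log u) + 1$ and $\xi > \log(u\log u)$, and pin down an explicit $u_0$ (likely around $20$–$30$). Near $u \approx e^e$ the gain from $\log\log u - 1$ is small, but the $0.5/\xi^2$ term alone contributes about $0.5/\xi^2 - 1/\xi^2\cdot O(\cdot)$. This should still dominate the $O(1/u)$ losses, though I may need to raise $u_0$ a bit. The remaining interval $1 < u \le u_0$ is covered by direct numerical evaluation of $\alpha(u)$, as already done in Lemma \ref{lemAx}'s computer-aided step. At the endpoint $u \to 1^+$, both sides tend to the appropriate limits, with $-\frac{1}{12u(1+1/\log u)} \to 0$, and this must be checked to hold strictly, which the numerics should confirm.
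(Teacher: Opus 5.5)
Your route is the paper's: rewrite $\xi/e^\xi=(1-e^{-\xi})/u$ using $e^\xi=1+u\xi$, insert the two-sided bound of Lemma~\ref{lemAx}, compare $\xi$ with $1+\log u$ via Lemma~\ref{lemxi}, and check small $u$ numerically. Your upper-bound sketch is sound. With the crude bounds of Lemma~\ref{lemxi}, your combined inequality closes once $u$ is in the low twenties. The paper keeps the bookkeeping cleaner: it first absorbs $0.02/u^2$ into the $0.5/\xi^2$ term using $u^2\ge 0.48\,\xi e^\xi$, which leaves exactly the condition $\xi>1+(1+1/u)\log u$.

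The lower-bound step, however, is wrong as written. The factor $1-1/\xi+1.75/\xi^2=1-(\xi-1.75)/\xi^2$ is not at most $0.9375$ for all $\xi\ge 2$. It tends to $1$ as $\xi\to\infty$ and exceeds $0.9375$ as soon as $\xi>14$. So there is no uniform margin, and this fails precisely in the large-$u$ range you call easy. Also $1/u=\xi/(e^\xi-1)>\xi e^{-\xi}$, the reverse of what you wrote.

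The true margin is $m:=(\xi-1.75)/\xi^2$, which is only of order $1/\log u$, and you must show it beats the loss $0.24/u$. Since $(1-e^{-\xi})(1-m)\le 1-m$, it suffices that $u(\xi-1.75)\ge 0.24\,\xi^2$. In the paper's form this is $u^2(1-1.75/\xi)>0.24\,e^\xi=0.24(1+u\xi)$, which holds for $u\ge 4$. With that replacement your argument goes through, together with the numerical check on the remaining bounded range of $u$ (including the limiting case $u=1$, as you note).
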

\begin{proof}
For $u\ge 4$, we have $u^2(1-1.75/\xi) >(0.02)  12(u\xi +1)=(0.02) 12  e^\xi$, by Lemma \ref{lemxi}. Lemma \ref{lemAx} 
implies
$$
\alpha(u) -\frac{0.02}{u^2}>  -\frac{\xi}{12 e^\xi}\left(1-\frac{1}{\xi}+\frac{1.75}{\xi^2}\right)-\frac{0.02}{u^2}
>  \frac{-\xi}{12 e^\xi}=\frac{-\xi}{12 (u\xi+1)}> \frac{-1}{12 u}.
$$
For $u\ge 1$, we have $u^2\ge (0.02) 24 \xi e^\xi= (0.02) 24 \xi(u\xi+1)$. Lemma \ref{lemAx} yields
$$
\alpha(u) +\frac{0.02}{u^2} < -\frac{\xi}{12 e^\xi}\left(1-\frac{1}{\xi}+\frac{0.5}{\xi^2}\right)+\frac{0.02}{u^2}
< \frac{-\xi}{12 e^\xi}\left(1-\frac{1}{\xi}\right)=\frac{-(\xi -1)}{12(u\xi+1)}.
$$
We claim that 
$$
-\frac{\xi -1}{12(u\xi+1)}< -\frac{1}{12u(1+1/\log u)}.
$$
Indeed, this is equivalent to $\xi>1+(1+1/u)\log u$, which holds for $u\ge 9$, by Lemma \ref{lemxi}.
This demonstrates the result for $u\ge 9$. For $1 \le u\le 9$ we confirm the result by graphing the
four quantities in question multiplied by $u$. 
\end{proof}

\begin{proof}[Proof of Theorem \ref{thm1}]
For $u\ge 5$ the result follows from Theorem \ref{thm2} and Lemma \ref{lemalpha}.
For $1\le u \le 5$, we calculate $\rho(u)$ as described in Section \ref{SecNum}
and graph the three expressions as in Figure \ref{fig1}.
\end{proof}

\section{Numerical calculation of $\rho(u)$}\label{SecNum}

In the range $1\le u \le 50$, we calculate $\rho(u)$ with the algorithm of Marsaglia, Zaman and Marsaglia \cite{MZM}: 
To approximate $\rho(u)$ in the interval $[n,n+1]$, where $n$ is an integer, 
we find the coefficients of the Taylor polynomial of order $250$ centered
at $n+1/2$, recursively from the coefficients of the polynomial centered at $n-1/2$.

For larger $u$, we evaluate the inverse Laplace integral numerically. After the change of variables $t=\tau \sqrt{I''(\xi)}$,
equation \eqref{eqlapinv} implies
$$
\frac{\rho(u)}{\tilde{\rho}(u)} 
= \int_{-\infty}^\infty \frac{1}{\sqrt{2 \pi}}   \exp\left\{I(\xi-i t/\sqrt{I''(\xi)})-I(\xi)+ui t/\sqrt{I''(\xi)}\right\} dt.
$$
The proof of Theorem \ref{thm2} shows that the real part of the integrand approaches the standard normal density function as $u$ grows.
Numerical integration in Mathematica converges well for $u\ge 15$.

With this way of calculating $\rho(u)$, our values match those in \cite{LunWat} to the accuracy they claim, 
namely at least five significant digits.

\end{document}